\newcommand{\ringO}{\mathcal{O}}
\newcommand{\Hy}{\mathcal{H}}
\newcommand{\C}{{\mathbb{C}}}
\newcommand{\R}{{\mathbb{R}}}
\newcommand{\Z}{{\mathbb{Z}}}
\newcommand{\T}{{\bf{T}}}
\newcommand{\rationals}{{\mathbb{Q}}}
\newcommand*{\Homol}{\operatorname{H}}
\newcommand*{\SL}{\operatorname{SL}}
\theoremstyle{plain}
\newtheorem{theorem}{Theorem}
\newtheorem{lemma}[theorem]{Lemma}
\theoremstyle{definition}
\newtheorem{proposition}[theorem]{Proposition}
\newtheorem{corollary}[theorem]{Corollary}
\theoremstyle{remark}
\newtheorem{remarks}[theorem]{\bfseries Remarks}
\begin{document}

\title{On a question of Serre}
\author{Alexander D. Rahm}
\address{National University of Ireland at Galway, Department of Mathematics}
\urladdr{http://www.maths.nuigalway.ie/\~{}rahm/}
\email{Alexander.Rahm@nuigalway.ie}
\thanks{Funded by the Irish Research Council. \hfill \textit{E-mail address:} Alexander.Rahm@nuigalway.ie}

\date{\today}

\begin{abstract}

 Consider an imaginary quadratic number field $\rationals(\sqrt{-m})$, with $m$ a square-free positive integer, 
and its ring of integers $\ringO$.
The \emph{\mbox{Bianchi} groups} are the groups $\mathrm{SL_2}(\ringO)$.
Further consider the Borel--Serre compactification \cite{Serre} of the quotient of hyperbolic 
$3$--space $\Hy$ by a finite index subgroup $\Gamma$ in a Bianchi group,
and in particular the following question which Serre posed on page 514 of the quoted article.
Consider the map~$\alpha$
 induced on homology when attaching the boundary
 into the Borel--Serre compactification.

\emph{How can one determine the kernel of $\alpha$ (in degree 1) ?}

Serre used a global topological argument and obtained the rank of the kernel of $\alpha$.
In the quoted article, Serre did add the question what submodule precisely this kernel is.
Through a local topological study, we can decompose the kernel of~$\alpha$ into its parts associated to each cusp.
\end{abstract}

 \maketitle

\footnotesize
 \textbf{ R\'esum\'e.} Consid\'erons un corps quadratique imaginaire $\rationals(\sqrt{-m})$, o\`u $m$ est un entier positif ne contenant pas de carr\'e, 
et son anneau d'entiers $\ringO$.
Les \emph{groupes de \mbox{Bianchi}} sont les groupes $\mathrm{SL_2}(\ringO)$.
Puis, nous consid\'erons la compactification de Borel--Serre \cite{Serre} du quotient de l'espace hyperbolique $\Hy$ \`a trois dimensions 
par un sous-groupe $\Gamma$ d'indice fini dans un groupe de Bianchi,
et en particulier la question suivante que Serre posait sur la page 514 de l'article cit\'e.
Consid\'erons l'application~$\alpha$ induite en homologie quand le bord  est attach\'e dans la compactification de Borel--Serre.

\emph{Comment peut-on d\'eterminer le noyau de $\alpha$ (en degr\'e 1) ?} 

Serre se servait d'un argument topologique global et obtenait le rang du noyau de $\alpha$.
Dans l'article cit\'e, Serre rajoutait la question de quel sous-module pr\'ecis\'ement il s'agit pour ce noyau.
 A travers d'une \'etude topologique locale, nous pouvons d\'ecomposer le noyau de~$\alpha$
dans ses parties associ\'ees \`a chacune des pointes. 
\normalsize 

\medskip

We will exclusively consider the Bianchi groups \emph{with only units} $\{\pm 1\}$ in $\ringO$,
 by which we mean that we exclude the two Bianchi groups where $\ringO$ are the Gauss{}ian or Eisenstein integers.
Those two special cases can easily be treated separately. 
A wealth of information on the \mbox{Bianchi} groups can be found in the monographs \cite{ElstrodtGrunewaldMennicke}, \cite{Fine}, \cite{MaclachlanReid}.
For a subgroup $\Gamma$ of finite index in $\mathrm{SL_2}(\ringO)$,
 consider the Borel--Serre compactification $_\Gamma \backslash \widehat{\Hy}$ of the orbit space $_\Gamma \backslash \Hy$,
constructed in the appendix of \cite{Serre}.
As a fundamental domain in hyperbolic space, we make use of the polyhedron with missing vertices at the cusps,
described by Bianchi \cite{Bianchi}, and which we will call the \emph{Bianchi fundamental polyhedron}.
Our main result is the following, for which we consider the cellular structure on $_\Gamma \backslash \widehat{\Hy}$
induced by the Bianchi fundamental polyhedron.

\begin{theorem} \label{torus}
 The boundary $\partial(_\Gamma \backslash \widehat{\Hy})$ 
 is included as a sub-cellular
chain complex into the Borel--Serre compactification $_\Gamma \backslash \widehat{\Hy}$ in the following way.
\begin{enumerate}
 \item[(0)] All vertices of $_\Gamma \backslash \widehat{\Hy}$ are equivalent modulo the image of the $1$--cells 
(they define the same class in degree 0 homology).
 \item[(1)] For each orbit of cusps, exactly one of the two attached $1$--cells is the boundary of a $2$--chain.
 \item[(2)] The boundary of the Bianchi fundamental polyhedron is the union over the attached $2$--cells.
\end{enumerate}
\end{theorem}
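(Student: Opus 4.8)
The plan is to compute everything inside the cellular chain complex $C_\bullet$ of $M:={}_\Gamma\backslash\widehat{\Hy}$ for the cell structure induced by the Bianchi fundamental polyhedron $\mathcal D$, the only genuinely geometric input being a description of the cells incident to a cusp. I would first record that structure. The interior ${}_\Gamma\backslash\Hy$ is the image of the orbit of $\mathcal D$, contributing one $3$--cell $\sigma$; the faces of $\mathcal D$ identified in pairs by the side--pairing transformations give the interior $2$--cells, and the edges and vertices of $\mathcal D$ modulo $\Gamma$ the interior $1$-- and $0$--cells. Over a cusp $c$ one truncates by a horoball; because the Gaussian and Eisenstein cases are excluded, $\Gamma_c$ is a rank--$2$ lattice of translations, so the attached boundary piece $T_c$ over $c$ is a torus with two attached $1$--cells $\mu_c,\lambda_c$, one attached $2$--cell $\tau_c$ and a vertex $v_c$. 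Part~(0) is then immediate: $M$ is the continuous image of the connected polyhedron $\mathcal D$, hence $\Homol_0(M)=\Z$, i.e.\ all $0$--cells are homologous modulo $\partial_1$.

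For part~(2): $M$ is an oriented pseudo--$3$--manifold with boundary $\bigsqcup_c T_c$, orientable since $\Gamma$ acts on $\Hy$ by orientation--preserving isometries (if $\Gamma$ has torsion one passes to a torsion--free subgroup of finite index, or works over $\rationals$). Orient $\sigma$ by $\Hy$. Each interior $2$--cell is the common image of two faces of $\mathcal D$ interchanged by an orientation--preserving isometry, which therefore carries the outward normal of one face to the inward normal of the other; hence these two faces appear with opposite signs in $\partial_3\sigma$ and cancel, whereas the horospherical faces are not glued outside their own $T_c$ and so each $\tau_c$ appears exactly once. Thus $\partial_3\sigma=\sum_c\pm\tau_c$, which is the assertion that the boundary of the Bianchi polyhedron is the union of the attached $2$--cells.

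Part~(1) is the substantial point. That $\mu_c$ and $\lambda_c$ cannot both bound is soft: by ``half lives, half dies'' the kernel of $\alpha$ is isotropic for the intersection pairing on $\Homol_1(\partial M;\rationals)=\bigoplus_c\Homol_1(T_c;\rationals)$, so its intersection with the symplectic plane $\Homol_1(T_c;\rationals)$ has rank at most $1$. That at least one of $\mu_c,\lambda_c$ bounds I would prove locally at $c$. Placing $c$ at $\infty$, the faces of $\mathcal D$ forming the chimney over $\infty$ fall into two $\Gamma$--orbits, giving interior $2$--cells $F_1,F_2$ with $\partial_2F_1=\delta_1-\mu_c$ and $\partial_2F_2=\delta_2-\lambda_c$, where the vertical chimney edges all collapse to a single $1$--cell that cancels, and $\delta_1,\delta_2$ are the lower--rim $1$--cycles of the chimney walls, supported on the images of the floor faces (pieces of the hemispheres $S_{c,d}$). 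So $\mu_c$ (resp.\ $\lambda_c$) is null--homologous in $M$ if and only if $\delta_1$ (resp.\ $\delta_2$) is, and the problem reduces to enlarging $F_1$ (or $F_2$) by floor faces and, where necessary, further $2$--cells into a $2$--chain whose boundary is exactly $\mu_c$. I expect this enlargement to be the main obstacle: it requires careful bookkeeping of the edge identifications and induced orientations among the floor faces of $\mathcal D$ --- the floor need not be orientable after passing to the quotient --- organised so that all interior edges cancel while precisely one of $\mu_c,\lambda_c$ survives. This is the local topological study announced in the abstract, and it is where the combinatorics specific to the Bianchi polyhedron is used.

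Combining the two halves finishes the proof: every cusp orbit $c$ contributes one of $\mu_c,\lambda_c$ to $\ker\alpha$, these classes are linearly independent in $\bigoplus_c\Homol_1(T_c;\rationals)$ (one primitive class per summand), and since by Serre's computation the rank of $\ker\alpha$ equals the number of cusp orbits, they form a basis of $\ker\alpha$. Hence $\ker\alpha=\bigoplus_c\langle\text{the bounding }1\text{-cell of }T_c\rangle$, which both shows that exactly one of the two attached $1$--cells bounds and exhibits the decomposition of $\ker\alpha$ into its cuspidal parts.
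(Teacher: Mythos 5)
Parts (0) and (2) of your argument are fine and essentially the paper's: connectedness for (0), and for (2) the cancellation in pairs, with opposite signs, of the side-paired $2$--dimensional facets of the Bianchi fundamental polyhedron (the paper cites Poincar\'e's theorem on fundamental polyhedra for exactly this), leaving only the horospherical $2$--cells $\tau_c$. The genuine gap is in part (1), and you have located it yourself: the claim that \emph{at least} one of $\mu_c,\lambda_c$ bounds is never proved. You reduce it to completing the chimney faces $F_1,F_2$ by floor faces into a $2$--chain with boundary exactly $\mu_c$, and then write that you ``expect this enlargement to be the main obstacle''; that is a statement of the problem, not a solution. The closing paragraph does not repair this either: combining Serre's computation $\operatorname{rank}\ker\alpha = h$ (the number of cusp orbits) with isotropy of $\ker\alpha$ only shows that $\ker\alpha$ meets each $\Homol_1(\T_c;\rationals)$ in rank at most one and has total rank $h$; it does not force $\ker\alpha$ to contain one of the two \emph{attached $1$--cells} of each torus --- a priori it could be spanned by classes such as $\mu_c+\lambda_c$, or by classes not supported on a single cusp. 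Identifying the actual submodule beyond its rank is precisely Serre's question, so the rank cannot be fed back in to answer it.

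The paper closes this gap by a local geometric argument (Lemma~\ref{torus-lemma}) rather than a chain-level computation on the floor of the polyhedron: the end of the Bianchi fundamental polyhedron at a cusp is foliated by translates of the fundamental rectangle $\mathcal F$ of the cusp stabiliser, each translate being subject to the same edge identifications by $\gamma_x,\gamma_y$ and hence becoming a torus in the quotient; the image of $\T$ therefore has a neighbourhood in ${}_\Gamma\backslash\widehat{\Hy}$ homeomorphic to Euclidean $3$--space with the interior of a solid torus removed, in which the loop coming from $\gamma_x$ contracts while the loop coming from $\gamma_y$ links the removed solid torus and survives. Your ``half lives, half dies'' observation is a clean alternative for the ``at most one'' half, but without an argument for the ``at least one'' half the theorem is not proved.
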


\begin{proof} ${}$

\begin{enumerate}
 \item[(0)] is obvious since the orbit space $_\Gamma \backslash \widehat{\Hy}$ is path-wise connected.
 \item[(1)] will be proved cusp-wise in Lemma~\ref{torus-lemma}. 
 \item[(2)] follows from Poincar\'e's theorem on fundamental polyhedra \cite{Poincare},
  which tells us that the 
  $2$--dimensional facets of the Bianchi fundamental polyhedron inside hyperbolic space appear in pairs modulo the action of~$\Gamma$,
  with opposite signs. 
  The $2$--cells which we attach at the cusps are by construction unique modulo the action of~$\Gamma$.
\end{enumerate}
\end{proof}

Throughout this article, we use the upper-half space model $\Hy$ for hyperbolic $3$-space, as it is the one used by Bianchi.
As a set, $$ \Hy = \{ (z,\zeta) \in \C \times \R \medspace | \medspace \zeta > 0 \}. $$
We will call the \emph{real upper-half plane} the subset of $\Hy$ such that the coordinate $z$ has vanishing imaginary part,
and the \emph{imaginary upper-half plane} the subset of $\Hy$ such that the coordinate $z$ has vanishing real part.
Recall \cite{Serre} that the Borel--Serre compactification joins a $2$-torus $\T$
to $_\Gamma \backslash \Hy$ at every cusp.
We decompose $\T$ in the classical way into a $2$--cell, two $1$--cells and a vertex (see figure~\ref{torus-cells}).

\begin{lemma} \label{torus-lemma}
 The inclusion of~$\T$ into the Borel--Serre compactification of~$_\Gamma \backslash \Hy$ 
 makes exactly one of the $1$--cells of~$\T$ become the boundary of a $2$--chain.
\end{lemma}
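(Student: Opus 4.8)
The plan is to single out the distinguished $1$--cell as the one coming from the real upper-half plane, and to produce the $2$--chain it bounds by slicing the Bianchi fundamental polyhedron (call it $\mathcal{D}$) with that plane. First I would move the cusp to $\infty$. Because the Gaussian and Eisenstein cases are excluded, the stabiliser of $\infty$ in $\Gamma$ acts on the horospheres purely by translations by a lattice $\Lambda\subseteq\ringO$, the attached torus is $\T=\C/\Lambda$, and $\Lambda\cap\R$ is infinite cyclic; I would take the two $1$--cells of $\T$ to be the image $\bar a$ of a generator of $\Lambda\cap\R$ together with the image $\bar b$ of a complementary generator of $\Lambda$. The assertion to prove becomes: $\bar a$ bounds a $2$--chain while $\bar b$ does not.

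The geometric heart of the matter is the identity
\[
\mathcal{D}\cap\{\,\mathrm{Im}\,z=0\,\}\ =\ \bigl\{\,|\mathrm{Re}\,z|\le\tfrac12,\ |z|\ge1\,\bigr\},
\]
i.e. the intersection of $\mathcal{D}$ with the real upper-half plane is exactly the classical fundamental domain of $\SL_2(\Z)$ on it. This holds because the vertical walls of $\mathcal{D}$ contain $\mathrm{Re}\,z=\pm\tfrac12$, and a short estimate, valid in the allowed range $m\ge2$, shows that on the strip $|\mathrm{Re}\,z|\le\tfrac12$ every isometric hemisphere $|z+d/c|^{2}+\zeta^{2}=|c|^{-2}$ lies at or below the unit hemisphere $|z|^{2}+\zeta^{2}=1$. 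Consequently the image of the real upper-half plane in the Bianchi orbifold is the modular curve, and its Borel--Serre compactification is a disk $D$ mapping into $_\Gamma\backslash\widehat{\Hy}$. In the quotient cell complex the two vertical sides of $D$ are identified with one another by $z\mapsto z+1$ and the two halves of its floor arc on the unit hemisphere are identified with one another by $z\mapsto-1/z$, in each case reversing orientation; hence the cellular boundary of the $2$--chain $[D]$ collapses to the single edge $\bar a$. This proves one half of the statement: $\bar a$ is the boundary of a $2$--chain. For $\Gamma=\SL_2(\ringO)$ with a single cusp this already finishes that half.

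For a general finite-index $\Gamma$, and for cusps lying in several $\SL_2(\ringO)$--orbits, I would repeat the slicing along all $\SL_2(\ringO)$--translates of the real upper-half plane -- and, when the class number exceeds $1$, along the finitely many further arithmetic totally geodesic surfaces meeting the remaining cusps -- each giving a $2$--chain whose cellular boundary is a combination of real $1$--cells $\bar a$. One then checks that these boundaries span the subspace $\bigoplus_{\text{cusps}}\rationals\,\bar a$ of $\Homol_1\bigl(\partial(_\Gamma\backslash\widehat{\Hy});\rationals\bigr)$, so that every real $1$--cell lies in $\ker\alpha$.

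Finally, that the complementary $1$--cell does not bound follows from Serre's global count together with the above: by Poincar\'e--Lefschetz duality the kernel of $\alpha$ in degree $1$ has $\rationals$--dimension equal to the number $s$ of cusps, while the $s$ real $1$--cells are linearly independent and all lie in $\ker\alpha$; hence they span it, and at each cusp exactly the real $1$--cell bounds. I expect the third paragraph to be the main obstacle: one must know that every cusp is met by a suitable arithmetic totally geodesic surface, and must keep precise track of which $1$--cell of each boundary torus a given slice cuts out, so that the boundaries produced really are the $\bar a$'s and really do span.
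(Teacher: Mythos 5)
Your strategy is genuinely different from the paper's. The paper argues locally at each cusp: it foliates the end of the Bianchi fundamental polyhedron by translates of the fundamental rectangle $\mathcal F$, observes that a neighbourhood of the image of~$\T$ in the compactification is homeomorphic to Euclidean $3$--space with an open solid torus removed, and concludes that the $\gamma_x$--loop contracts while the $\gamma_y$--loop is linked with the removed solid torus and hence does not. You instead exhibit the bounding $2$--chain explicitly as the modular curve sitting inside the Bianchi orbifold, and rule out the second $1$--cell by Serre's global rank count via Poincar\'e--Lefschetz duality. Your first two paragraphs are correct and essentially reproduce the Proposition the paper proves right after this lemma (the slice of the Bianchi fundamental polyhedron by the real upper-half plane is the classical modular fundamental domain, and its cellular boundary collapses to the single top edge); the hemisphere estimate for $m\ge 2$ is fine, and the half-lives-half-dies count is valid over $\rationals$ because the underlying space of the quotient orbifold is a compact oriented $3$--manifold with toral boundary.

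The genuine gap is the one you flag yourself, and it is not a technicality: it sits exactly where the content of the lemma lies. Every $\SL_2(\ringO)$--translate of the real upper-half plane meets only cusps of principal ideal class (for $g\in\SL_2(\ringO)$ and coprime rational integers $p,q$, the ideal $(ap+bq,\,cp+dq)$ attached to the cusp $g(p/q)$ equals $\ringO$), so when the class number exceeds $1$ the remaining cusps are reached by none of your slices. Other arithmetic totally geodesic surfaces do exist, but you must then (i) arrange them compatibly with the cell structure induced by the Bianchi fundamental polyhedron, so that the slice is actually a cellular $2$--chain, and (ii) identify the boundary slope it cuts out on $\T_i$. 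Point (ii) is the crux: a priori such a slice shows only that \emph{some} primitive class $p\,\bar a_i+q\,\bar b_i$ bounds, whereas the lemma asserts that one of the two distinguished $1$--cells of the chosen cell structure bounds. The same issue reappears for a proper finite-index subgroup $\Gamma$ even at $\infty$, where the modular slice cuts out a multiple of the translation edge determined by the cusp width. The paper's local foliation argument is precisely what pins down the slope as the $\gamma_x$--cell uniformly at every cusp; without an analogue of it your construction does not yet prove the statement as formulated.
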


\begin{proof}
 Consider the fundamental rectangle $\mathcal F$ for the action of the cusp stabiliser on the plane joined to $\Hy$ at our cusp.
 There is a sequence of rectangles in $\Hy$ obtained as translates of~$\mathcal F$ orthogonal to all the geodesic arcs emanating from the cusp.
 This way, the portion of the fundamental polyhedron which is nearest to the cusp, is trivially foliated by
 (locally homeomorphic to the Cartesian product of a geodesic arc with) translates of~$\mathcal F$
 (see figure \ref{diagram6} for the case $\Gamma = \SL_2({\mathcal{O}}_{\rationals(\sqrt{-6} \thinspace)})$,
 where the fundamental polyhedron admits one cusp at $\infty$ and one cusp at $\left. \frac{\sqrt{-6}}{2}\right)$.
 The boundaries of these translates are subject to the same identifications by $\Gamma$ as the boundary of~$\mathcal F$.
 Namely, denote by $t({\mathcal F})$ one of the translates of~$\mathcal F$; and denote by $\gamma_x$ and $\gamma_y$
 two generators, up to the $-1$ matrix, of the cusp stabiliser identifying the opposite edges of~$\mathcal F$
 (for a full Bianchi group $\SL_2(\Z[\omega])$, we can choose 
$\gamma_x =$\scriptsize$\begin{pmatrix} 1 & 1 \\ 0 & 1 \end{pmatrix}$ \normalsize and
$\gamma_y =$\scriptsize$\begin{pmatrix} 1 & \omega \\ 0 & 1 \end{pmatrix}$ \normalsize
for the cusp at~$\infty$, and the other cusp stabilisers are conjugate under $\SL_2(\C)$). 
Then,  $\gamma_x$ and $\gamma_y$ identify the opposite edges of $t({\mathcal F})$
 and make the quotient of the latter rectangle into a torus.
 So in the quotient space by the action of  $\Gamma$, the image of~$\T$ is wrapped into a sequence of layers of tori.
 And therefore in turn,
 the 3-dimensional interior of the Bianchi fundamental polyhedron is wrapped around the image of~$\T$
 along the entire surface of the latter.
 Hence, there is a neighbourhood of the image of~$\T$
 that is homeomorphic to Euclidean $3$--space with the interior of a solid torus removed.
 Now, considering the cell structure of the torus (see figure~\ref{torus-cells}),
 we see that precisely one of the loops generating the fundamental group of~$\T$
 (without loss of generality, the edge that has its endpoints identified by $\gamma_x$) 
 can be contracted in the interior of the image of the Bianchi fundamental polyhedron.
 The other loop (the one obtained by the identification by  $\gamma_y$)
 is linked with the removed solid torus and thus remains uncontractible in the Borel--Serre compactification
 of~$_\Gamma \backslash \Hy$.
\end{proof}

\begin{figure}
\includegraphics[height=60mm]{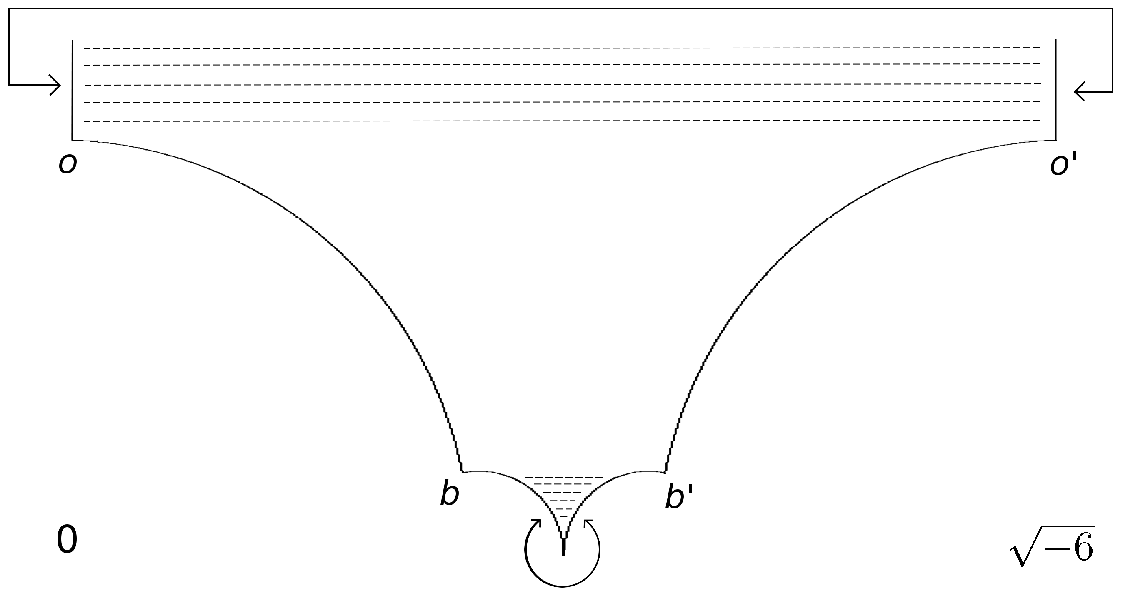}
\caption{Intersection of the fundamental polyhedron for SL$_2({\mathcal{O}}_{\rationals(\sqrt{-6} \thinspace)})$
 with the \mbox{imaginary} upper half-plane. The intersections with the translates of~$\mathcal F$ are indicated by the dashed lines. }
\label{diagram6}
\end{figure}

\begin{wrapfigure}[12]{r}[15pt]{50mm}
\includegraphics[width=50mm]{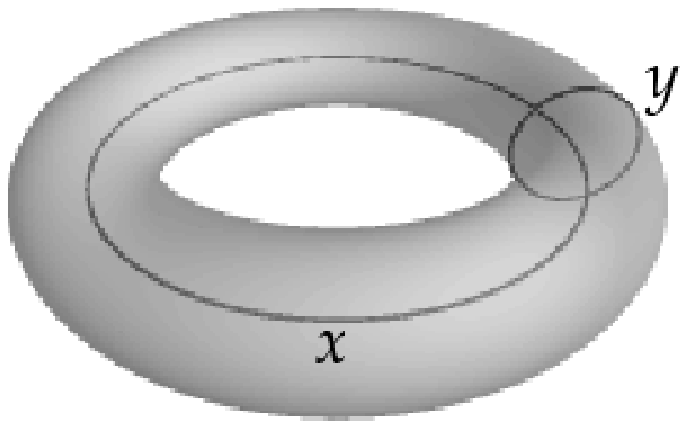}
\caption{Cell structure of the $2$--torus}
\label{torus-cells}
\end{wrapfigure}

\vbox{
\begin{corollary} \label{map_on_homology}
 The map $\alpha$ induced on integral homology by the inclusion of the boundary $\partial (_\Gamma \backslash \widehat{\Hy})$
 into the Borel--Serre compactification $_\Gamma \backslash \widehat{\Hy}$ is determined as follows.
\begin{itemize}
 \item In degree $0$, it is the augmentation map.
 \item In degree $1$, it is a surjection onto the non-cuspidal part of the homology,
       for each orbit of cusps  vanishing on exactly one of the two attached $1$--cells, 
       and injective from the set of remaining $1$--cells.
 \item In degree $2$, it is the map the kernel of which admits as generator the union of all torus $2$--cells.
\end{itemize}
\end{corollary}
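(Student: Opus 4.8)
The plan is to set everything up in the cellular chain complex of $_\Gamma \backslash \widehat{\Hy}$, where the corollary becomes a direct translation of Theorem~\ref{torus} and Lemma~\ref{torus-lemma} together with Serre's rank count. In the case at hand --- only units $\pm1$, so that the cusp stabilisers are torsion-free rank-$2$ lattices of translations, while the quotient of Euclidean $3$--space by a finite rotation group at an interior elliptic point is again Euclidean $3$--space --- the space $_\Gamma \backslash \widehat{\Hy}$ is a compact \emph{orientable topological $3$--manifold} with boundary $\partial(_\Gamma \backslash \widehat{\Hy}) = \bigsqcup_i \T_i$, one $2$--torus per orbit of cusps. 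Each $\T_i$ carries the cell structure of figure~\ref{torus-cells}: a vertex, the $1$--cells $x_i$ and $y_i$ identified respectively by $\gamma_x$ and $\gamma_y$, and one $2$--cell $f_i$; all boundary maps internal to $\partial(_\Gamma \backslash \widehat{\Hy})$ vanish, so its integral homology is $\bigoplus_i\Z$, $\bigoplus_i(\Z x_i\oplus\Z y_i)$, $\bigoplus_i\Z f_i$ in degrees $0,1,2$. The degree-$0$ statement is then exactly Theorem~\ref{torus}(0): the space is connected, so $\Homol_0(_\Gamma \backslash \widehat{\Hy}) = \Z$, and each cusp vertex hits the generator, so $\alpha$ is the augmentation. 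For degree $2$, I would run the homology exact sequence of the pair $(_\Gamma \backslash \widehat{\Hy}, \partial)$: since $\Homol_3(_\Gamma \backslash \widehat{\Hy}) = 0$ (a compact $3$--manifold with non-empty boundary), the connecting homomorphism is an isomorphism $\Homol_3(_\Gamma \backslash \widehat{\Hy}, \partial) \xrightarrow{\ \sim\ } \ker\alpha$; by orientability the left-hand side is $\Z$, generated by the relative fundamental class, which is represented by the fundamental $3$--chain of the Bianchi fundamental polyhedron; and by Theorem~\ref{torus}(2) --- the $2$--facets interior to $\Hy$ cancelling in pairs, by Poincar\'e \cite{Poincare} --- the connecting homomorphism carries it to $\sum_i f_i$. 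Hence $\ker\alpha = \Z\cdot\sum_i f_i$ in degree $2$, generated by the union of the torus $2$--cells.

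The degree-$1$ assertion is the one that answers Serre's question, and its hard part is already carried out in Lemma~\ref{torus-lemma}: for each orbit of cusps that lemma supplies a $2$--chain in $_\Gamma \backslash \widehat{\Hy}$ whose boundary is one of the two attached $1$--cells, say $x_i$, so that $\bigoplus_i\Z x_i \subseteq \ker\alpha$. Two observations then force equality. First, $\bigoplus_i\Z x_i$ is a \emph{direct summand} of $\Homol_1(\partial(_\Gamma \backslash \widehat{\Hy})) = \bigoplus_i(\Z x_i\oplus\Z y_i)$, with complement $\bigoplus_i\Z y_i$. Second, Serre's global topological argument \cite{Serre} shows that the rank of $\ker\alpha$ equals the number of orbits of cusps --- this is the ``half lives, half dies'' lemma for the compact orientable $3$--manifold $_\Gamma \backslash \widehat{\Hy}$ with torus boundary, since over $\rationals$ the group $\Homol_1(\partial)$ has dimension $2$ per torus while the kernel of $\alpha$ has exactly half of it --- and that rank is exactly the rank of $\bigoplus_i\Z x_i$. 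A subgroup of a finitely generated free abelian group that contains a direct summand of equal rank must coincide with that summand (the quotient injects into the free group $\bigoplus_i\Z y_i$ and has rank $0$). Therefore $\ker\alpha = \bigoplus_i\Z x_i$ in degree $1$; consequently $\alpha$ vanishes on precisely one of the two attached $1$--cells for each orbit of cusps, it is injective on the span of the remaining cells $y_i$ --- because $\sum_i b_i y_i \in \ker\alpha$ would put $\sum_i b_i y_i$ into $(\bigoplus_i\Z x_i) \cap (\bigoplus_i\Z y_i) = 0$ --- and, its image in degree $1$ being by definition the non-cuspidal part of $\Homol_1(_\Gamma \backslash \widehat{\Hy})$, the surjectivity onto that part is tautological.

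I expect the main obstacle to be conceptual rather than computational, and to lie precisely in the passage from Serre's \emph{rank} count to the \emph{precise submodule} in degree $1$: it is not enough that $\ker\alpha$ has the same rank as some natural subgroup; one needs Lemma~\ref{torus-lemma} to identify that subgroup concretely as the span of the \emph{specific} cusp $1$--cycles $x_i$, and one needs the elementary but essential remark that this span is a direct summand of $\Homol_1(\partial(_\Gamma \backslash \widehat{\Hy}))$ --- only with both facts at hand does the rank count upgrade to an equality, and this is what produces the promised cusp-by-cusp decomposition of $\ker\alpha$. I would also make explicit the point flagged at the outset: because we restrict to the Bianchi groups whose only units are $\pm1$, the Borel--Serre compactification is a bona fide compact orientable manifold-with-boundary rather than merely an orbifold, so that the manifold-theoretic inputs used above --- the vanishing of $\Homol_3$, Poincar\'e--Lefschetz duality, ``half lives, half dies'' --- are legitimate over $\Z$ and no orbifold subtleties intervene.
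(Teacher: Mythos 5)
Your argument is correct, and it arrives at the statement by a route that only partially coincides with the paper's. The paper's own proof is a three‑line cellular argument: since $\partial(_\Gamma\backslash\widehat{\Hy})$ sits inside $_\Gamma\backslash\widehat{\Hy}$ as a sub‑cellular chain complex, the induced chain map is injective and preserves cycles, so one only has to divide out the boundaries of the ambient complex, and Theorem~\ref{torus}($n$) names those boundaries degree by degree (the chains $c(x_i)$ of Lemma~\ref{torus-lemma} in degree $1$, and $\partial_3 P=\sum_i f_i$ in degree $2$). In particular the paper never invokes Serre's rank computation, Poincar\'e--Lefschetz duality, or the exact sequence of the pair at this point; in degree $1$ it leans entirely on the local statement that each $x_i$ bounds while each $y_i$ stays uncontractible near its torus --- which, read strictly, does not by itself exclude that some nontrivial integral combination of the $y_i$ becomes a boundary globally. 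Your two supplementary inputs close exactly that gap: $\bigoplus_i\Z x_i$ is a direct summand of the free group $\Homol_1(\partial(_\Gamma\backslash\widehat{\Hy}))$, and Serre's ``half lives, half dies'' count gives $\operatorname{rank}\ker\alpha_1=$ number of cusps, so the summand and the kernel must coincide over~$\Z$. The price is the extra (true, and worth stating, as you do) verification that the underlying space of $_\Gamma\backslash\widehat{\Hy}$ is a compact orientable topological $3$--manifold, since the finite stabilisers act by orientation‑preserving rotations and $\R^3$ modulo a finite rotation group is again homeomorphic to $\R^3$; this is what legitimises $\Homol_3=0$ and the duality argument integrally. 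Your degree‑$2$ derivation via $\Homol_3(_\Gamma\backslash\widehat{\Hy},\partial)\cong\Z$ and the connecting homomorphism is a repackaging of the paper's observation that the Bianchi fundamental polyhedron is the unique $3$--cell and $\partial_3P=\sum_i f_i$; both yield $\ker\alpha_2=\Z\cdot\sum_i f_i$, and degree $0$ is identical in the two treatments. In short: same skeleton, but your global duality step makes the degree‑$1$ identification of the kernel as a precise submodule more airtight than the paper's purely local appeal to Lemma~\ref{torus-lemma}.
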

}
\begin{proof}
 Consider the cellular decomposition of~$_\Gamma \backslash \Hy$ with only $3$--cell the Bianchi fundamental polyhedron.
 Consider the chain map induced by including the boundary $\partial \left(_\Gamma \backslash \widehat{\Hy}\right)$ 
 as a sub-cellular chain complex into the Borel--Serre compactification $_\Gamma \backslash \widehat{\Hy}$.
 As a sub-chain inclusion, this chain map is injective and preserves cycles.
 So we only have to divide out the boundaries in $_\Gamma \backslash \widehat{\Hy}$.
 In degree $n$, we carry this out applying Theorem~\ref{torus}~($n$).
\end{proof}

\begin{proposition}
 For any Bianchi group $\mathrm{SL_2}(\ringO)$ with only units $\{\pm 1\}$, the boundary of 
 the $2$--cell in the boundary of the Bianchi fundamental polyhedron that is contained in the real upper-half plane
 passes to the quotient by $\mathrm{SL_2}(\ringO)$ as follows.
 All of its edges in $\Hy$ appear in pairs with opposite orientation signs, so only 
 the top edge of this $2$--cell
 lying in the attached infinitely far torus contributes non-trivially,
 and it becomes a loop because its boundary vertices are identified.
\end{proposition}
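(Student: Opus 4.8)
The plan is to identify the $2$-cell of the statement with the classical fundamental triangle of $\SL_2(\Z)$ sitting inside the real upper-half plane, and then to read off its boundary from the side-pairings of that triangle. Write $P_{\R}$ for the real upper-half plane, a copy of the hyperbolic plane inside $\Hy$, and let $F=D\cap P_{\R}$ denote the $2$-cell of the statement, $D$ being the Bianchi fundamental polyhedron. The key input is the symmetry of the situation: complex conjugation $(z,\zeta)\mapsto(\bar z,\zeta)$ is an isometry of $\Hy$ fixing $P_{\R}$ pointwise, it normalises $\SL_2(\ringO)$ because $\ringO$ is stable under conjugation, and $D$ is invariant under it; hence $P_{\R}$ crosses the interior of $D$ and $F$ is a genuine $2$-dimensional cell. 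A short computation with the defining hemispheres $|cz+d|^{2}+|c|^{2}\zeta^{2}=1$, $(c,d)\in\ringO^{2}$, then shows that over the real segment $\{z=x,\ |x|\le\tfrac12\}$ the floor of $D$ is the single hemisphere $|z|^{2}+\zeta^{2}=1$ (a hemisphere with $|c|\ge\sqrt2$ reaches height at most $1/\sqrt2$ there, below the unit one, and the hemispheres $|z\mp1|^{2}+\zeta^{2}=1$ only tie it at $z=\pm\tfrac12$), and that the only vertical walls of $D$ meeting this segment are $\{\operatorname{Re}(z)=\pm\tfrac12\}$. So $F$ is the standard $\SL_2(\Z)$-triangle in $P_{\R}$, with edges in $\Hy$ the two vertical sides $L\subset\{\operatorname{Re}(z)=-\tfrac12\}$ and $R\subset\{\operatorname{Re}(z)=\tfrac12\}$ and the unit-hemisphere arc $A$; the walls along $L$ and $R$ are side-paired by $\gamma_x=\left(\begin{smallmatrix}1&1\\0&1\end{smallmatrix}\right)$, and the hemisphere face along $A$ is self-paired by $S=\left(\begin{smallmatrix}0&-1\\1&0\end{smallmatrix}\right)$; both $\gamma_x$ and $S$ lie in $\SL_2(\Z)$, hence preserve $P_{\R}$, so they carry the edges of $F$ among themselves.

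Next I would compute $\partial F$ and push it into the quotient by $\SL_2(\ringO)$. Here $\gamma_x$ carries $L$ onto $R$ preserving orientation, so $L$ and $R$, which occur in $\partial F$ with opposite signs, cancel in the quotient; and $S$ restricts to $z\mapsto -z$ on $A$, fixing the midpoint $(0,1)$ --- a point of $P_{\R}$ lying also on the imaginary upper-half plane, hence a vertex of the induced cellular structure once the latter is taken fine enough --- and interchanging the two halves of $A$ with a reversal of orientation, so those two halves descend to one and the same $1$-cell with opposite orientations and likewise cancel. The only remaining contribution is the top edge $e$, which in the Borel--Serre compactification caps the unique ideal vertex of $F$, at the cusp $\infty$: it is the image in $\T=\C/(\Z+\Z\omega)$ of the segment $\{z=x,\ |x|\le\tfrac12\}$ at infinity, that is, the $1$-cell of $\T$ generated by $\gamma_x$ (compare figure~\ref{torus-cells} and the proof of Lemma~\ref{torus-lemma}). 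Its two boundary vertices are the corners of $F$ at the cusp lying over $\operatorname{Re}(z)=-\tfrac12$ and over $\operatorname{Re}(z)=\tfrac12$; these differ by $\gamma_x$, hence coincide in $\T$, so $e$ is a loop. Thus $\partial[F]=\pm[e]$ in the quotient, which exhibits $F$ as an explicit $2$-chain realising the contraction of the $\gamma_x$-loop promised in Lemma~\ref{torus-lemma}.

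The step I expect to be the main obstacle is the one setting up this concrete picture, which has to be checked uniformly over all Bianchi groups with only the units $\pm1$: that the portion of $P_{\R}$ inside $D$ is cut out exactly by the two walls $\{\operatorname{Re}(z)=\pm\tfrac12\}$ and the single unit hemisphere --- so that the relevant face-pairings are $\gamma_x$ and $S$ and in particular lie in $\SL_2(\Z)$ and fix $P_{\R}$ --- together with the facts that the stabiliser of $P_{\R}$ in $\SL_2(\ringO)$ equals $\SL_2(\Z)$ and that the induced cellular structure is refined enough for the $S$-fixed point $(0,1)$ to be a vertex. Granting all this, the two cancellations are nothing but the side-pairing bookkeeping for the triangle $F$, and the statement follows from Poincar\'e's theorem on fundamental polyhedra \cite{Poincare} applied inside $P_{\R}$.
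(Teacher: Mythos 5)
Your proposal is correct and follows essentially the same route as the paper: both identify the $2$--cell with the classical fundamental domain of $\SL_2(\Z)$ acting on the real upper-half plane via the inclusion $\SL_2(\Z)\hookrightarrow\mathrm{SL_2}(\ringO)$, and observe that its boundary cancels in the quotient under the standard side-pairings, leaving only the top edge as a loop. Your explicit hemisphere computation showing that the floor of the Bianchi fundamental polyhedron over the real segment is the unit hemisphere, uniformly over the fields with only units $\{\pm 1\}$, merely fills in a detail the paper leaves implicit.
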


\begin{proof}
 By the natural inclusion of~$\SL_2(\Z)$ into $\mathrm{SL_2}(\ringO)$,
 we obtain a natural inclusion of the upper half-plane with the action of~$\SL_2(\Z)$ as the real upper-half plane into $\Hy$,
compatible with the subgroup action of~$\SL_2(\Z)$ on $\Hy$.
 The boundary of the fundamental domain for~$\SL_2(\Z)$ in the upper half-plane
 vanishes when passing to the quotient of the action.
 The considered inclusion maps this fundamental domain to the $2$--cell in question,
 so the assertion follows.  
\end{proof}

 It follows that the
 cycle obtained from this top edge by the matrix
 $M := $\scriptsize$\begin{pmatrix} 1 & 1 \\ 0 & 1 \end{pmatrix} \quad$ \normalsize 
 identifying its endpoints,
 generates the kernel of the map induced on first degree homology by sending the $2$--torus into the compactified quotient space,
 replacing the cusp at infinity.
 Hence the matrix $M$
 is an element of the commutator subgroup of any Bianchi group.
 This can alternatively be seen from the decomposition of this matrix
as a product of finite order matrices in $\SL_2(\Z)$, which is included naturally into every Bianchi group.
Together with Theorem~\ref{torus}, we obtain the following corollary,
which has already been obtained by Serre \cite{Serre}*{page 519}.

\begin{corollary}
 The matrix \scriptsize $\begin{pmatrix} 1 & \omega \\ 0 & 1 \end{pmatrix}$ \normalsize, with $\omega$
 an element giving the ring of integers the form $\Z[\omega]$,
 represents a non-trivial element of the commutator factor group of the Bianchi group $\SL_2(\Z[\omega])$. 
\end{corollary}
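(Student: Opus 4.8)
The plan is to read the statement off at the cusp at infinity, combining the preceding Proposition with Theorem~\ref{torus}~(1). Since we work with a Bianchi group with only units $\pm 1$, the stabiliser of the cusp at $\infty$ is, modulo the $-1$ matrix, free abelian on $\gamma_x = \begin{pmatrix} 1 & 1 \\ 0 & 1 \end{pmatrix}$ and $\gamma_y = \begin{pmatrix} 1 & \omega \\ 0 & 1 \end{pmatrix}$, and the two $1$--cells of the torus $\T$ attached at this cusp are the loops closed up by $\gamma_x$ and by $\gamma_y$ respectively. The Proposition identifies the loop closed up by $\gamma_x$ with the class of the top edge of the $2$--cell contained in the real upper half-plane, hence displays it as a boundary; and by Theorem~\ref{torus}~(1) exactly one of the two $1$--cells of $\T$ bounds a $2$--chain. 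Consequently the loop $\ell$ closed up by $\gamma_y$ does \emph{not} bound: being a cycle that is not a boundary, it is a non-zero class in $\Homol_1(_\Gamma \backslash \widehat{\Hy};\,\Z)$, where now $\Gamma = \SL_2(\Z[\omega])$ is the full Bianchi group.

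Next I would transport this back to the group. Since the Borel--Serre bordification $\widehat{\Hy}$ is simply connected, there is a natural homomorphism $\SL_2(\Z[\omega]) \to \Homol_1(_\Gamma \backslash \widehat{\Hy};\,\Z)$ sending a matrix $\gamma$ to the homology class of the loop obtained by projecting to $_\Gamma \backslash \widehat{\Hy}$ any path in $\widehat{\Hy}$ from a fixed base point $x_0$ to its translate $\gamma x_0$; this is independent of the chosen path because $\widehat{\Hy}$ is simply connected, it is multiplicative because two such paths concatenate to a path for the product once one passes to the quotient, and its target being abelian it factors through the commutator factor group of $\SL_2(\Z[\omega])$. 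Choosing $x_0$ on the boundary face attached at the cusp at $\infty$, this homomorphism sends $\gamma_y = \begin{pmatrix} 1 & \omega \\ 0 & 1 \end{pmatrix}$ precisely to the class of the $1$--cell $\ell$, which is non-zero by the first paragraph. Hence $\begin{pmatrix} 1 & \omega \\ 0 & 1 \end{pmatrix}$ has non-trivial image in the commutator factor group, as claimed.

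The one point that could cause trouble — and it is not a genuine obstacle — is that the action on $\widehat{\Hy}$ only sees the image of $\SL_2(\Z[\omega])$ in $\PSL_2(\Z[\omega])$, so the homomorphism just described is really the composite of $\SL_2(\Z[\omega]) \to \PSL_2(\Z[\omega])$ with the analogous map for $\PSL_2(\Z[\omega])$. This loses nothing: if $\gamma_y$ lay in the commutator subgroup of $\SL_2(\Z[\omega])$, then its image would lie in the commutator subgroup of $\PSL_2(\Z[\omega])$, since a homomorphism carries commutators into commutators, and this would contradict the non-triviality of the image class. All the geometric substance — that the loop closed up by $\begin{pmatrix} 1 & \omega \\ 0 & 1 \end{pmatrix}$ fails to bound — is already supplied by Theorem~\ref{torus} together with the preceding Proposition, so what remains is only this bookkeeping.
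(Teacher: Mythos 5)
Your proposal is correct and follows essentially the same route as the paper: the Proposition shows the loop closed up by $\begin{pmatrix} 1 & 1 \\ 0 & 1 \end{pmatrix}$ bounds, Theorem~\ref{torus}~(1) then forces the loop closed up by $\begin{pmatrix} 1 & \omega \\ 0 & 1 \end{pmatrix}$ to represent a non-zero class in $\Homol_1(_\Gamma \backslash \widehat{\Hy})$, and one concludes via the natural map from the commutator factor group to this homology group. The paper leaves that last transporting step (and the $\SL_2$ versus $\PSL_2$ point) implicit, whereas you spell it out correctly; this is a completion of detail, not a different argument.
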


\newpage
\subsection*{The long exact sequence} Let $\Gamma$ be a subgroup of finite index in a Bianchi group with only units $\{\pm 1\}$.
Let $\T_i$ be the torus attached at the cusp $i$ of~$\Gamma$,
and let $x_i$ and $y_i$ denote the cycles generating $\Homol_1(\T_i)$.
Let $c(x_i)$ be a chain of hyperbolic $2$-cells with boundary in $_\Gamma \backslash \widehat{\Hy}$ given by the cycle $x_i$.
This is well-defined because of~Theorem~\ref{torus}(1).
Let $P$ be the Bianchi fundamental polyhedron of~$\Gamma$, admitting as boundary $\cup_i \T_i$ by~Theorem~\ref{torus}(2).

\begin{corollary}
Under the above assumptions and with the above notations,
 the long exact sequence induced on integral homology by the map $\alpha$ concentrates in

\begin{tikzpicture}[descr/.style={fill=white,inner sep=1.5pt}]
        \matrix (m) [
            matrix of math nodes,
            row sep=1em,
            column sep=2.5em,
            text height=1.5ex, text depth=0.25ex
        ]
        {   &  & \Homol_3(_\Gamma \backslash \widehat{\Hy}) = 0 & \langle P \rangle \\
	    & \bigoplus_i \langle \T_i \rangle &  \bigoplus_i \langle \T_i \rangle /_{\langle \cup_i \T_i \rangle} \oplus \Homol_2^{\rm cusp} & \Homol_2^{\rm cusp}  \bigoplus_i \langle c(x_i) \rangle \\
            & \bigoplus_i \langle x_i, y_i \rangle & \bigoplus_i \langle y_i \rangle   \oplus \Homol_1^{\rm cusp} & \Homol_1^{\rm cusp} \oplus \{ { {\rm vanishing} \atop {\rm ideal} } \} \\
            & \bigoplus_i \Z  & \Z & 0, \\
        };

        \path[overlay,->, font=\scriptsize,>=latex]
        (m-1-3) edge (m-1-4) 
        (m-1-4) edge[out=355,in=175]  (m-2-2)
        (m-2-2) edge node[descr,yshift=0.9ex] {$\alpha_2$} (m-2-3)
        (m-2-3) edge (m-2-4)
        (m-2-4) edge[out=355,in=175]  (m-3-2)
        (m-3-2) edge node[descr,yshift=0.9ex] {$\alpha_1$} (m-3-3)
        (m-3-3) edge (m-3-4)
        (m-3-4) edge[out=355,in=175]  (m-4-2)
        (m-4-2) edge node[descr,yshift=0.9ex] {$\alpha_0$} node[descr,yshift=-1.2ex] {\rm augmentation} (m-4-3)
        (m-4-3) edge (m-4-4);
\end{tikzpicture}

where the maps without labels are the obvious restriction maps making the sequence exact;
 and where $\Homol_1^{\rm cusp}$ and $\Homol_2^{\rm cusp}$ are generated by cycles from the interior of~$_\Gamma\backslash \Hy$.
\end{corollary}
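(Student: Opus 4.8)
The plan is to recognise the displayed sequence as the long exact homology sequence of the pair $\bigl({}_\Gamma\backslash\widehat{\Hy},\ \partial({}_\Gamma\backslash\widehat{\Hy})\bigr)$ (that is, the exact sequence induced by $\alpha$); to lighten notation write $\widehat{Y} = {}_\Gamma\backslash\widehat{\Hy}$ and $\partial\widehat{Y} = \bigcup_i\T_i$ for its Borel--Serre boundary. Two of the three columns then cost nothing. The left column is $\Homol_*(\partial\widehat{Y}) = \bigoplus_i\Homol_*(\T_i)$, read off from the homology of a $2$--torus. The middle column, together with the maps $\alpha_0,\alpha_1,\alpha_2$, is exactly the content of Corollary~\ref{map_on_homology}, while $\Homol_3(\widehat{Y}) = 0$ because $\widehat{Y}$ is homotopy equivalent to the open orbit space ${}_\Gamma\backslash\Hy$, which deformation retracts onto a $2$--dimensional spine. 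Since in addition $\partial\widehat{Y}$ is a union of surfaces and $\widehat{Y}$ carries no cell above dimension $3$, the sequence vanishes outside the displayed window, so it genuinely ``concentrates'' there. The one new task is to identify the right column $\Homol_*(\widehat{Y},\partial\widehat{Y})$ together with the connecting homomorphisms.

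For that I would work in the relative cellular chain complex $C_*(\widehat{Y})/C_*(\partial\widehat{Y})$ attached to the Bianchi fundamental polyhedron. In top degree the only cell is $P$, and $\partial_3 P = \bigcup_i\T_i$ by Theorem~\ref{torus}(2); since $\bigcup_i\T_i$ already lies in the subcomplex $C_*(\partial\widehat{Y})$, the relative boundary of $P$ is zero, so $\Homol_3(\widehat{Y},\partial\widehat{Y}) = \langle P\rangle\cong\Z$ and the connecting map sends $[P]$ to $\sum_i[\T_i]$, injectively onto $\langle\bigcup_i\T_i\rangle$. Exactness at $\Homol_2(\partial\widehat{Y})$ is then precisely the assertion of Corollary~\ref{map_on_homology} that $\langle\bigcup_i\T_i\rangle = \ker\alpha_2$, a welcome consistency check. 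In degree $0$, connectedness of $\widehat{Y}$ joins every vertex to a torus vertex by an edge path, whence $\Homol_0(\widehat{Y},\partial\widehat{Y}) = 0$ (equivalently, $\alpha_0$ is onto). In degrees $1$ and $2$, exactness of the sequence of the pair yields short exact sequences $0 \to \operatorname{coker}\alpha_n \to \Homol_n(\widehat{Y},\partial\widehat{Y}) \xrightarrow{\ \delta_n\ } \ker\alpha_{n-1} \to 0$, and here both $\ker\alpha_0$ --- the augmentation ideal of the free abelian group on the cusps, i.e.\ the ``vanishing ideal'' --- and $\ker\alpha_1 = \bigoplus_i\langle x_i\rangle$, the one loop per cusp produced by Corollary~\ref{map_on_homology}, are free abelian, so the sequences split. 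Setting $\Homol_n^{\rm cusp} := \operatorname{coker}\alpha_n$ and using the chain $c(x_i)$ of Theorem~\ref{torus}(1), which is a relative $2$--cycle with $\delta_2[c(x_i)] = [x_i]$, one gets the decompositions $\Homol_1(\widehat{Y},\partial\widehat{Y}) = \Homol_1^{\rm cusp}\oplus\{\text{vanishing ideal}\}$ and $\Homol_2(\widehat{Y},\partial\widehat{Y}) = \Homol_2^{\rm cusp}\oplus\bigoplus_i\langle c(x_i)\rangle$; representatives of the $\Homol_n^{\rm cusp}$ can be pushed off the attached tori into the interior, which is why these summands carry that name.

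It then remains to splice the three columns and to name the unlabelled arrows: $\Homol_n(\widehat{Y})\to\Homol_n(\widehat{Y},\partial\widehat{Y})$ annihilates $\operatorname{im}\alpha_n$ and is the inclusion of the cuspidal summand, whereas $\delta_n$ annihilates the cuspidal summand and carries $\langle c(x_i)\rangle$ onto $\langle x_i\rangle$, $\langle P\rangle$ onto $\langle\bigcup_i\T_i\rangle$, and the vanishing ideal isomorphically onto $\ker\alpha_0$; exactness at each node is then either built into the sequence of the pair or a restatement of Corollary~\ref{map_on_homology}. The step that I expect to demand the most care is the integral bookkeeping. The splittings of the two relative short exact sequences are unproblematic because $\ker\alpha_0$ and $\ker\alpha_1$ are free; and the middle-column direct sum $\Homol_2(\widehat{Y}) = \operatorname{im}\alpha_2\oplus\Homol_2^{\rm cusp}$ holds because $\operatorname{coker}\alpha_2$ embeds into $\Homol_2(\widehat{Y},\partial\widehat{Y})$, which --- the relative $3$--boundary being zero --- is a subgroup of the free abelian group of relative $2$--chains and hence free, so no torsion obstruction arises. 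The remaining splitting $\Homol_1(\widehat{Y}) = \bigoplus_i\langle y_i\rangle\oplus\Homol_1^{\rm cusp}$, where $\Homol_1^{\rm cusp}$ may genuinely carry torsion, is the delicate case, but it is already part of Corollary~\ref{map_on_homology}.
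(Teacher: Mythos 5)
Your proposal is correct and follows essentially the same route as the paper: the paper's proof likewise takes the long exact sequence of the pair $\bigl({}_\Gamma\backslash\widehat{\Hy},\,\partial({}_\Gamma\backslash\widehat{\Hy})\bigr)$, feeds in Corollary~\ref{map_on_homology}, and uses the vanishing of $\Homol_3$ (via the virtual cohomological dimension $2$, your $2$--dimensional spine). You merely make explicit the identification of the relative homology column and the splittings, which the paper leaves implicit.
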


\begin{proof}
 We take into account that the virtual cohomological dimension of the Bianchi groups is~2,
 and apply Corollary~\ref{map_on_homology} to the long exact sequence in homology associated to the short exact sequence of chain complexes
 $\xymatrix{\partial Y \ar[r]^\alpha & Y \ar[r] & Y/_{\partial Y} }$,
 where we write $Y$ for the cellular chain complex of~$_\Gamma \backslash \widehat{\Hy}$.
\end{proof}

\begin{wrapfigure}[16]{r}[15pt]{50mm}
  \centering 
  \includegraphics[width= 35mm]{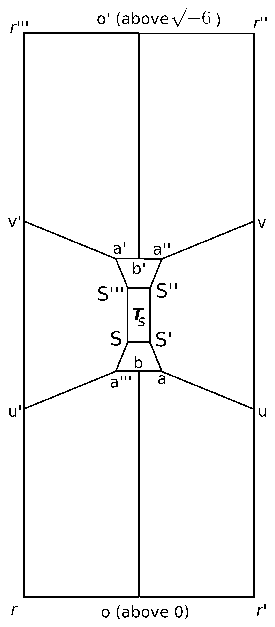}
  \caption{Bottom facets for $\SL_2({\mathcal{O}}_{\rationals(\sqrt{-6} \thinspace)})$} \label{fundamental domain 6}
\end{wrapfigure}

\begin{remarks}
\begin{enumerate}
 \item  Note that the cuspidal homology satisfies equality of ranks between $\Homol_2^{\rm cusp}$ and $\Homol_1^{\rm cusp}$,
 because the naive Euler-Poincar\'e characteristic vanishes.
\item We obtain an analogous long exact sequence in cohomology with a coefficient module $M$; and  
$\Homol^1(_\Gamma \backslash \widehat{\Hy}; \thinspace M) \cong {\rm Hom}(\bigoplus_i \langle y_i \rangle, \thinspace M) \oplus \Homol^1_{\rm cusp}$,
with $\Homol^1_{\rm cusp}$ being generated by cocycles from the interior of~$_\Gamma\backslash \Hy$.
\end{enumerate}
\end{remarks}

\subsection*{Example. The case $\Gamma = \SL_2({\mathcal{O}}_{\rationals(\sqrt{-6} \thinspace)})$.}

There are fifteen orbits of edges, labelled $(s,s')$,  $(s',s'')$, $ (r,\infty), \thinspace (\infty,\infty'), \thinspace (\infty',\infty'')$, 
$(b,a)$, $(a,s')$, $(a,u)$, $(u,v)$, $(a'',v)$, $(b,o)$, $(o',b')$, $(u,r')$, $(r'',v)$ and $(o,r')$.
The remaining edges of the Bianchi fundamental polyhedron, 
which we assemble from figures~\ref{diagram6} and \ref{fundamental domain 6},
are subject to identifications with those edges by $\Gamma = \SL_2({\mathcal{O}}_{\rationals(\sqrt{-6} \thinspace)})$. 

The boundary of the quotient of the Bianchi fundamental polyhedron is \mbox{$\T_\infty +\T_s$.}
The kernel of~$\partial_2$ is the direct sum of $\langle \T_\infty \rangle$ and $\langle \T_s \rangle$.
Therefore, \mbox{$\Homol_2(\Gamma; \rationals) \cong \rationals$.}

\begin{tabular}{l|l}
The kernel of~$\partial_1$ is & \mbox{And the image of~$\partial_2$ is} \\
\tiny
$\begin{array}{l}
      \langle (s,s')\rangle \\ 
\oplus  \langle(s',s'')\rangle \\ 
\oplus  \langle (a,u)+(u,v)+(v,a'')\rangle \\ 
\oplus  \langle (\infty,\infty')\rangle \\ 
\oplus  \langle (\infty',\infty'')\rangle \\
\oplus  \langle (r',u)+(u,v)+(v,r'') \rangle \\ 
\oplus  \langle (b,o)+(o',b')\rangle \\
\oplus  \langle (o,r')+(r',u)+(u,a)+(a,b)+(b,o)\rangle .
\end{array}$ 
\normalsize
&
\tiny 
$\begin{array}{l}\langle (s,s')\rangle \\
 \oplus  \langle (a,u)+(u,v)+(v,a'')-(s',s'')\rangle \\
 \oplus  \langle (\infty,\infty')\rangle \\
 \oplus   \langle (r',u)+(u,v)+(v,r'')-(\infty',\infty'') \rangle \\
 \oplus  \langle (o,r')+(r',u)+(u,a)+(a,b)+(b,o)\rangle  \\
 \oplus  \langle -(o,r')+(a'',v)+(v,r'')-(a,b)+(o',b') \rangle,
 \end{array}$
\normalsize
\end{tabular}

\medskip

from which we see that  $\Homol_1(\Gamma; \rationals) \cong \rationals^2$. In this case, 
this has been already 
known
by Swan~\cite{Swan},
 and later been reobtained with different methods \cite{Vogtmann}, \cite{RahmFuchs}.

\subsection*{Acknowledgements}
 I would like to thank Mathias Fuchs for helping me to obtain geometric intuition about the Borel-Serre compactification,
 and for his suggestions on the draft.
And I would like to thank the referee of a preliminary version and Philippe Elbaz-Vincent for helpful remarks.
Thanks to Nicolas Bergeron, Jean Raimbault and Shifra Reif for discussions, and to Graham Ellis for support and encouragement.
\\
This work is dedicated to the memory of Fritz Grunewald.

\begin{bibdiv}
\begin{biblist}
\bib{Bianchi}{article}{
   author={Bianchi, Luigi},
   title={Sui gruppi di sostituzioni lineari con coefficienti appartenenti a corpi quadratici immaginar\^{\i}},
   language={Italian},
   journal={Math. Ann.},
   volume={40},
   date={1892},
   number={3},
   pages={332--412},
   ISSN={0025-5831},
   review={\MR{1510727}},
   review={JFM 24.0188.02}
}
\bib{ElstrodtGrunewaldMennicke}{book}{
   author={Elstrodt, J\"urgen},
   author={Grunewald, Fritz},
   author={Mennicke, Jens},
   title={Groups acting on hyperbolic space},
   series={Springer Monographs in Mathematics},
   publisher={Springer-Verlag},
   place={Berlin},
   date={1998},
   pages={xvi+524},
   ISBN={3-540-62745-6},
   review={\MR{1483315 (98g:11058)}},
   review={Zbl 0888.11001 }
}
\bib{Fine}{book}{
   author={Fine, Benjamin},
   title={Algebraic theory of the Bianchi groups},
   series={Monographs and Textbooks in Pure and Applied Mathematics},
   volume={\textbf{129}},
   publisher={Marcel Dekker Inc.},
   place={New York},
   date={1989},
   pages={viii+249},
   ISBN={0-8247-8192-9},
   review={\MR{1010229 (90h:20002)}},
   review={Zbl 0760.20014 }
}
 \bib{MaclachlanReid}{book}{
   author={Maclachlan, Colin},
   author={Reid, Alan W.},
   title={The arithmetic of hyperbolic 3-manifolds},
   series={Graduate Texts in Mathematics},
   volume={\textbf{219}},
   publisher={Springer-Verlag},
   place={New York},
   date={2003},
   pages={xiv+463},
   ISBN={0-387-98386-4},
   review={\MR{1937957 (2004i:57021)}},
   review={Zbl 1025.57001}
}
\bib{Poincare}{article}{
   author={Poincar{\'e}, Henri},
   title={M\'emoire sur les groupes klein\'eens},
   language={French},
   journal={Acta Math.},
   volume={3},
   date={1883},
   number={1},
   pages={49--92},
   issn={0001-5962},
   review={\MR{1554613}, JFM~15.0348.02},
}
\bib{RahmFuchs}{article}{
      author={Rahm, Alexander~D.},
      author={Fuchs, Mathias},
       title={The integral homology of {${\rm PSL}_2$} of imaginary quadratic
  integers with nontrivial class group},
        date={2011},
        ISSN={0022-4049},
     journal={J. Pure Appl. Algebra},
      volume={215},
      number={6},
       pages={1443\ndash 1472},
         url={http://dx.doi.org/10.1016/j.jpaa.2010.09.005},
      review={\MR{2769243 (2012a:11063)}},
  review={Zbl pre05882434 },
}
\bib{Serre}{article} {
    AUTHOR = {Serre, Jean-Pierre},
     TITLE = {Le probl\`eme des groupes de congruence pour ${SL}_2$},
   JOURNAL = {Ann. of Math. (2)},
    VOLUME = {92},
      YEAR = {1970},
     PAGES = {489--527},
      ISSN = {0003-486X},
     review = { \MR{ 0272790} (\textbf{42} \#7671)},
     review = { Zbl 0239.20063}
}
\bib{Swan}{article}{
   author={Swan, Richard G.},
   title={Generators and relations for certain special linear groups},
   journal={Advances in Math.},
   volume={6},
   date={1971},
   pages={1--77},
   ISSN={0001-8708},
   review={\MR{0284516} (\textbf{44} \#1741)},
   review={Zbl 0221.20060 }
}
\bib{Vogtmann}{article}{
   author={Vogtmann, Karen},
   title={Rational homology of Bianchi groups},
   journal={Math. Ann.},
   volume={272},
   date={1985},
   number={3},
   pages={399--419},
   ISSN={0025-5831},
   review={\MR{799670 (87a:22025)}},
   review={Zbl 0545.20031 }
}
\end{biblist}
\end{bibdiv}
\end{document}